\documentclass{article}
\usepackage{graphicx} 
\usepackage{amsmath,amssymb,amsthm}
\usepackage{mathtools}
\usepackage{authblk}
\usepackage[hidelinks]{hyperref}
\title{An Aggregated SIR Model \\
for Spatial Epidemic Propagation}
\date{}

\author[1]{M. Soledad Aronna}
\author[2,3]{Mariana Bergonzi}

\author[2,3]{Ernesto Kofman}

\affil[1]{School of Applied Mathematics, FGV EMAp, Brazil}

\affil[2]{French-Argentine International Center for Information and System Sciences (CIFASIS),CONICET, Argentina}
\affil[3]{FCEIA, Universidad Nacional de Rosario, Argentina}

\newtheorem{theorem}{Theorem}

\usepackage[usenames, dvipsnames]{color}

\usepackage{appendix}

\usepackage{pgf,tikz}
\usepackage{natbib}

\newcommand{\sat}{\mathrm{sat}}

\newcommand{\pS}{p_{\mathrm S}}
\newcommand{\pI}{p_{\mathrm I}}
\newcommand{\pR}{p_{\mathrm R}}
\newcommand{\pRz}{p_{\mathrm R 0}}
\newcommand{\dpS}{\dot p_{\mathrm S}}

\newcommand{\dpR}{\dot p_{\mathrm R}}
\newcommand{\Sinf}{S_{\infty}}
\newcommand{\Rinf}{R_{\infty}}
\newcommand{\Rz}{\mathcal{R}_0}

\newtheorem{thm}{Theorem}[section]

\newtheorem{prop}[thm]{Proposition}

\begin{document}
\maketitle


\abstract{We propose an extension of the classical susceptible–infectious–recovered (SIR) model that incorporates the effects of spatial propagation of an epidemic through a small number of additional compartments. The model is designed to capture the dynamics of disease spread across multiple interconnected cities or populated regions, while avoiding the high dimensionality and large parameter sets typical of network-based or agent-based approaches. Instead of explicitly modeling individual locations or mobility networks, we introduce aggregate variables that describe whether the epidemic has not yet reached, is currently active in, or has already passed through different regions of the spatial domain. This formulation allows the model to reproduce key qualitative features observed in aggregated incidence data, such as prolonged plateaus and multiple infection waves arising from asynchronous local outbreaks. The resulting system consists of ordinary differential equations with a relatively small number of interpretable parameters, providing a tractable framework for analytical investigation and numerical simulation. Our approach offers a parsimonious alternative for studying spatially structured epidemic dynamics when only aggregated data are available or when model simplicity is essential.}

\paragraph{Keywords}
SIR Model, Spatial Disease Spread, Aggregated SIR Model, Basic Reproduction Number, Epidemics on Networks

\maketitle

\section{Introduction}

The aim of this work is to extend the classical SIR (susceptible-infectious-recovered) model by including new compartments that represent the level of {\em spatial propagation} of the epidemic.
The main objective is to find a system that can imitate the dynamics observed when a disease spreads in different interconnected cities or populated regions but, differently to what is done in network-based models (see  references below), we aim at finding a set of equations involving a relatively small quantity of parameters.



Models for spatial propagation of diseases have existed for a quite long time already. Early work by  \cite{may1984spatial} from the 80s  considered the  impact of spatial heterogeneity on the vaccination requirement for the eradication of a canonical infectious disease. The latter was based on previous studies, by \cite{hethcote1978immunization} and  \cite{post1983endemic}, that noted the importance of hetereogenity in epidemiological modeling. 
More references and historical context of foundational articles can be found in  \cite{rhodes1996persistence} and \cite{bolker1995space}.

 \cite{sattenspiel1995structured} and  \cite{arino2003multi} explored how regional mobility and migration can be incorporated into epidemic models. In particular, \cite{arino2003multi} emphasized the use of space-discrete models and derived inequalities for the basic reproduction number, \(\mathcal{R}_0\).
Considering the structure of urban connectivity networks,  \cite{colizza2007reaction} and  \cite{colizza2008epidemic} developed a framework to calculate global thresholds for epidemic invasion. Complementary to these works,  \cite{pastor2001epidemic} and  \cite{pastor2002epidemic} made seminal contributions to the study of epidemic dynamics on complex networks of individuals. Recently,  \cite{aronna2024optimal} conducted a study addressing how intercity commuting patterns within metropolitan regions influence the spread of infectious diseases.



Another widely used method to take into account spatial spread is modeling through partial differential equations (PDE) with a diffusion term. Some of the initial works involving this type of model were employed for rabies \citep{murray1986spatial,kallen1985simple}, and were derived as simple extensions of the ordinary differential equations (ODE) models by adding a diffusion operator representing the random movement of the infectious species. An extensive review of PDE modeling for infectious diseases can be found in {\em e.g.} \cite{hoang2025differential}.


Agent-based simulation models provide an alternative to equation-based systems and, in some cases, can represent richer scenarios than the latter \citep{hunter2018comparison}. In an agent-based framework, each individual agent can be assigned distinct attributes and decision rules, enabling the model to capture interactions and behaviours at the individual level \citep{marshall2015formalizing,hunter2017taxonomy}.
In contrast, the approach taken in this article differs substantially from agent-based modelling: the defining feature of our model is aggregation rather than individual-level representation.


In this article, we propose an ODE model that extends the classical SIR framework by adding variables that represent spatial propagation.
Loosely speaking, the total population is assumed to live in several interconnected districts and is partitioned according to whether the epidemic is active in each location, has not yet arrived, or has already passed. Rigorous definitions are provided in the next section.
The idea is to capture the essential features of spatial diffusion in a network of cities or regions, where the epidemic begins in one location and subsequently spreads to others, generating multiple infectious waves. When examining aggregated incidence data, one often observes a curve that does not exhibit a single peak but instead displays a kind of {\em plateau,} sustained as long as different nodes reach their peaks at different times.

The article is organized as follows. In Section \ref{SecModel} , we introduce the model, explain the meaning and interpretation of the variables and parameters, and discuss the value of the basic reproduction number $\mathcal{R}_0$ for the proposed framework. Section \ref{SecExamples} presents numerical simulations that validate our findings. Detailed derivations of the basic reproduction number for both the novel aggregated model and the network model are provided in the Appendix.

\section{Model Formulation}
\label{SecModel}

In this section, we introduce the proposed epidemic model and describe its fundamental dynamical properties. We first present the system of ordinary differential equations governing the evolution of the state variables and discuss the role and epidemiological interpretation of the model parameters. Particular attention is given to the mechanisms through which spatial propagation is incorporated into the framework. We then comment on the derivation of the basic reproduction number, $\mathcal{R}_0$, which serves as a key threshold parameter characterizing the potential for epidemic outbreak and persistence, and which corresponds to the one for a classical $SIR$ model, as established in Theorem \ref{ThmR0} below.

\subsection{Model Dynamics}
We consider an epidemic taking place in a population of size $N$ that is divided in certain number of districts. The individuals remain most of the time in their own district, but they sometimes move towards other districts where they can get infected or, in case they are already infected, they can infect other individuals. 

We shall first divide the population into three main groups $\pS,$ $\pI$ and $\pR$, according to the following description:
\begin{itemize}
    \item $\pS(t)$ is the population living in districts that were not yet reached by the epidemic at time $t$;
    \item $\pI(t)$ is the population living in districts where the epidemic is active at time $t$;
   \item $\pR(t)$ is the population living in districts where the epidemic has already finished.
\end{itemize}
That way,
\begin{equation} \label{eq:N}
    \pS(t)+\pI(t)+\pR(t)=N
\end{equation}
Notice that 
\begin{itemize}
    \item the population $\pS(t)$ is composed of susceptible individuals;
    \item the population $\pR(t)$ is composed of recovered individuals, but also of susceptible individuals (not every individual is reached by the epidemic); 
    \item the population $\pI(t)$ is composed of susceptible, infected, and recovered individuals, denoted by $S(t)$, $I(t)$, and $R(t)$, respectively. Thus, 
    \begin{equation} \label{eq:pI}
        S(t)+I(t)+R(t)=\pI(t);
    \end{equation}
   \item the population $\pI(t)$ receives susceptible individuals that leave $\pS(t)$ at a certain rate $\dpS(t)$ as the corresponding districts are reached by the epidemic;
   \item the population  $\pR(t)$ receives susceptible and recovered individuals that leave $\pI(t)$ at a certain rate $\dpR(t)$ as the corresponding districts recover, this is, as the wave leaves those districts; 
   \item we let $\Sinf$ denote the fraction of individuals that remain susceptible after the epidemic finishes in a district and $\Rinf \coloneq 1-\Sinf$  the fraction of individuals that is reached by the epidemic. Then, the rate at which susceptible and recovered individuals move towards $\pR(t)$ can be computed as $\Sinf \dpR(t)$  and $\Rinf \dpR(t)$, respectively.
\end{itemize}

We shall assume that, within the population $\pI(t)$, the dynamics is governed by a modified SIR model, which takes into account that the arrival of susceptible population from $\pS$ occurs at rate $\dot \pS$ and the departure of susceptible and recovered individuals toward $\pR$. Thus, the model for the population $\pI(t)$ can be written as 

\begin{equation} \label{eq:SIRp}
    \begin{split}
        \dot S(t)&=-\dpS(t)-\beta \frac{S(t) I(t)}{\pI(t)}-\Sinf \dpR(t)\\
        \dot I(t)&=\beta \frac{S(t) I(t)}{\pI(t)}-\gamma I(t)\\
        \dot R(t)&=\gamma I(t)-\Rinf \dpR(t) \\
    \end{split}
\end{equation}

Figure~\ref{fig:psir} shows the scheme for the different compartments. 

\begin{figure}[htb]
    \centering
    \includegraphics[width=5cm]{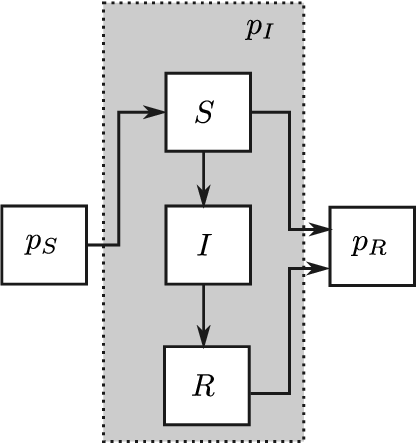}
    \caption{Aggregated model scheme.}
    \label{fig:psir}
\end{figure}

In order to complete the model, we need to establish the expressions for $\dpS(t)$ and $\dpR(t)$ that represent the advance of the epidemics through the different districts and their subsequent recovery. Regarding these terms, we make the following observations and assumptions.
\begin{itemize}
    \item The rate $\dpS(t)$ at which the epidemic reaches new districts is proportional to the number $I(t)$ of infected individuals.
    \item 
 That rate also depends on the population $\pS(t)$ living in districts that have not yet been
reached by the epidemic . However, when $\pS(t)$ is sufficiently
large, the rate of spatial advance may fail to grow proportionally to $\pS(t)$, since
it is ultimately driven by the fraction of this population that is in contact with
districts where the epidemic is already active. Consequently, a reasonable expression
for the dynamics of $\pS(t)$ is
\begin{equation} \label{eq:dpS}
    \dpS(t) = - \rho\, I(t)\, \sat\!\left( \alpha \frac{\pS(t)}{N} \right),
\end{equation}
where $\sat(\cdot)$ is a \emph{saturation function} taking values in the interval $[0,1]$.

 We expect that a condition of the following type holds:
 \[
     \sat(x) \approx
     \begin{cases}
         1&\text{if }x>1,\\
         x&\text{otherwise}.
     \end{cases}
 \]
A smooth approximation of that function is given by 
 \begin{equation} \label{eq:sat2}
     \sat(x)=\frac{2}{\pi}\arctan\left(\frac{\pi}{2} x\right)
 \end{equation}
which is more adequate for its use in numerical algorithms.

    \item Susceptible individuals in $p_I$ are driven either to $I$ or to $p_R$, where herd immunity has been achieved. Thus, we can think that recovered individuals in $p_I$ move susceptible individuals towards $p_R$ so that
\begin{equation}\label{eq:dS}
     \dot S(t)=-\dpS(t)-\beta \frac{S(t) I(t)}{\pI(t)}-\beta_R \frac{S(t) R(t)}{\pI(t)}
 \end{equation}    
   for certain parameter $\beta_R$.

   \item Comparing this expression with that of Eq.\eqref{eq:SIRp}, results in
\begin{equation} \label{eq:dpR2}
     \dpR(t)=\frac{\beta_R}{\Sinf} \frac{S(t) R(t)}{\pI(t)}
 \end{equation}
\end{itemize}
Then, combining Eqs.\eqref{eq:pI} and \eqref{eq:dpR2} with Eq.\eqref{eq:SIRp}, the complete model equations result as follows
\begin{equation} \label{eq:aggmodel}
    \begin{split}
        \dpS(t)&=-\rho I(t) \sat(\alpha \frac{ \pS(t)}{N})\\
        \dot S(t)&=\rho I(t) \sat(\alpha \frac{ \pS(t)}{N})-\beta  \frac{S(t) I(t)}{\pI(t)}-\beta_R  \frac{S(t) R(t)}{\pI(t)}\\
        \dot I(t)&=\beta \frac{S(t) I(t)}{\pI(t)}-\gamma I(t)\\
        \dot R(t)&=\gamma I(t)-\frac{\beta_R \Rinf}{\Sinf} \frac{S(t) R(t)}{\pI(t)}\\
        \dpR(t)&= \frac{\beta_R}{\Sinf} \frac{S(t) R(t)}{\pI(t)}
    \end{split}
\end{equation}
with $\pI(t)=S(t)+I(t)+R(t)$.

\subsection{About the basic reproduction number $\mathcal{R}_0$}

Before discussing the model parameters, we compute the \emph{basic reproduction
number} $\mathcal{R}_0$ associated with our model, following the classical approach
of \cite{VanDenDriessche2002}. Under a set of hypotheses, which are rigorously verified
in Appendix~\ref{AppR0}, the procedure developed in \cite{VanDenDriessche2002} applies
to our setting. The full derivation of $\mathcal{R}_0$ is provided in
Appendix~\ref{AppR0}. The result is stated below.

\begin{theorem}
\label{ThmR0}
The basic reproduction number for model~\eqref{eq:aggmodel} is given by
\begin{equation}
    \mathcal{R}_0 = \frac{\beta}{\gamma},
\end{equation}
which coincides with the basic reproduction number of the standard SIR model
(see, for example, \cite{brauer2012mathematical}).
\end{theorem}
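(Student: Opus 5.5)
We will apply the next-generation matrix method of \cite{VanDenDriessche2002}. The natural disease-free equilibrium is the state in which the epidemic has not yet appeared anywhere: $I=R=0$ with $\pS+S+\pR=N$. The simplest representative is $x_0=(\pS,S,I,R,\pR)=(p_{S0},S_0,0,0,p_{R0})$ with $S_0>0$, so that $\pI=S_0$ near $x_0$. Every point with $I=R=0$ is an equilibrium of \eqref{eq:aggmodel}, because each right-hand side contains a factor $I$ or $R$. The disease-free set is therefore a continuum of equilibria, not an isolated point. The $\mathcal R_0$ computed below will turn out to be independent of which point of this set is chosen.

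The infected compartment is $I$ alone. The recovered class $R$ is not infected in the sense of \cite{VanDenDriessche2002}, since it cannot produce new infections. Accordingly we split
\[
\mathcal F_I=\beta\frac{SI}{\pI},\qquad \mathcal V_I=\gamma I .
\]
The remaining compartments $(\pS,S,R,\pR)$ form the uninfected vector.

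We then check hypotheses (A1)--(A5) of \cite{VanDenDriessche2002}.
\begin{itemize}
\item Nonnegativity of $\mathcal F,\mathcal V^{\pm}$ and the condition that $\mathcal V^-=0$ when $I=0$ are immediate.
\item Invariance of the disease-free set holds because $\dot I=0$ whenever $I=0$.
\item The condition $\mathcal F=0$ there holds because $\mathcal F_I$ vanishes when $I=0$.
\end{itemize}

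The delicate hypothesis is (A5): the Jacobian of the uninfected subsystem at $x_0$, restricted to $I=0$, should have eigenvalues with negative real part. Here it fails strictly, because of the continuum of equilibria. Setting $I=0$, the linearization in $(\pS,S,R,\pR)$ at $R=0$ has the single nonzero diagonal entry $-\frac{\beta_R\Rinf}{\Sinf}\frac{S_0}{S_0}$, coming from the $R$ equation. The remaining directions have zero eigenvalues.

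I expect this to be the main obstacle. I would handle it in one of two ways.
\begin{itemize}
\item Use the mass constraint \eqref{eq:N} to reduce the state, and observe that the neutral directions correspond exactly to moving along the manifold of disease-free equilibria.
\item Invoke the standard weakened version of (A5) used for SIR-type models, as in \cite{brauer2012mathematical}. There the threshold for the linearized infected subsystem is still governed by $\rho(FV^{-1})$, since the infected block $F-V$ decouples at $x_0$.
\end{itemize}
Concretely, the infected equation linearized at $x_0$ is $\dot I=(\beta S_0/\pI-\gamma)I$, and it involves no other variable to first order. So the growth or decay of the infection near any disease-free state with $S_0=\pI$ is governed by the sign of $\beta-\gamma$, regardless of the neutral directions.

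Finally we compute the linearizations
\[
F=\frac{\partial \mathcal F_I}{\partial I}(x_0)=\beta\frac{S_0}{\pI}=\beta,\qquad V=\gamma .
\]
The first equality uses $\pI=S_0$ at the equilibrium where $I=R=0$. This gives $\mathcal R_0=\rho(FV^{-1})=\beta/\gamma$. The terms involving $\rho$, $\alpha$, $\beta_R$, $\Sinf$ and the saturation function all carry factors $I$ or $R$ multiplying uninfected quantities. They therefore enter only the off-diagonal blocks and do not affect $FV^{-1}$, which is why $\mathcal R_0$ coincides with that of the classical SIR model.
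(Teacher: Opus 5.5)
Your proposal is correct and follows the same route as the paper's appendix: the van den Driessche--Watmough next-generation method with $I$ as the only infected compartment, $\mathcal F_I=\beta SI/\pI$ and $\mathcal V_I=\gamma I$, which gives $F=\beta$, $V=\gamma$ and $\mathcal R_0=\beta/\gamma$. The paper takes the disease-free state $x_0=(0,S_0,0,0,\pRz)$, i.e.\ with $\pS=0$, whereas you allow any $\pS$, which changes nothing; your remark that (A5) fails strictly, because the disease-free states form a continuum of equilibria with zero eigenvalues in the $\pS$, $S$, $\pR$ directions, is accurate and more careful than the paper, which announces a check of (A5) but only computes $D\mathcal F(x_0)$ and $D\mathcal V(x_0)$ without ever addressing their spectrum.
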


\subsection{About the model parameters}

The model given by equation~\eqref{eq:aggmodel} involves seven parameters, namely
$\beta, \gamma, \rho, \alpha, \beta_R, \Rinf, \Sinf$. As usual, $\beta$ denotes
the transmission rate and $\gamma$ the recovery rate.

The parameters $\Rinf$ and $\Sinf = 1 - \Rinf$ represent the fractions of individuals
who are recovered or remain susceptible, respectively, after the epidemic has
completely passed through a district. In the standard SIR model, these quantities
are defined as the limits, as time tends to infinity, of the recovered and susceptible
populations.
Accordingly, $\Rinf$ can be computed as in the classical SIR framework as the solution of the transcendental equation
\begin{equation}
    \Rinf = 1 - e^{-\mathcal{R}_0 \Rinf},
    \qquad \text{with } \mathcal{R}_0 = \frac{\beta}{\gamma}.
\end{equation}

In conclusion, there are only 3 additional parameters to those of a standard SIR model: 

\begin{itemize}
    
    \item $\rho$ and $\alpha$ are related to the mobility of the population and the degree of connectivity between districts;

    \item $\beta_R$ defines the rate at which the population moves towards the recovered compartment $\pR$. Thus, it is related to the duration of the outbreak in the districts, which depends in turn on $\beta$, $\gamma$ and the population size of each district. 
    

\end{itemize}

\section{Numerical Examples}\label{SecExamples}
In this section we present two numerical examples. In the first one, the aggregated model is used to reproduce the simulation results of a synthetic metapopulation model. In the second one, the model is utilized to fit the real data corresponding to the daily cases of Covid-19 reported in Argentina. 

The reported simulation results can be reproduced using the Python routines available here: \url{https://fceia.unr.edu.ar/~kofman/files/pSIR_Agg.zip}.

\subsection{Synthetic metapopulation model}
We consider a population divided into $D$ districts with equal population size $N_i = N / D$, for each $i=1,\dots, N$. The dynamics in each region is governed by the following set of equations 
\begin{equation} \label{eq:metapop}
    \begin{split}
        \dot S_i(t)&=-\beta \frac{S_i(t)\hat I_i(t)}{N_i}\\
        \dot I_i(t)&=\beta \frac{S_i(t)\hat I_i(t)}{N_i}-\gamma I_i(t)\\
        \dot R_i(t)&=\gamma I_i(t)
    \end{split}
\end{equation}
for $i=1,\ldots,D$, with
\begin{equation}\label{hatI}
    \hat I_i(t)=
    \begin{cases}
        (1-p)I_1(t)+p I_2(t) & \text{ if }i=1\\
        (1-p)I_D(t)+p I_{D-1}(t) & \text{ if }i=D\\
        (1-2p)I_i(t)+p I_{i-1}(t)+p I_{i+1}(t) & \text{ otherwise}
    \end{cases}
\end{equation}
From latter equation, we see that in the network model \eqref{eq:metapop}, each district has two neighboring districts, except at the boundaries (i.e., for \(i=1\) and \(i=D\)).

This model, which can be regarded as a simplification of the one introduced in \cite{aronna2024optimal}, represents the situation in which individuals spend a fraction of time $p$ in each neighboring district and the remainder of the time in their own district. The model is also a particular case of the one presented in Appendix~\ref{AppR0network}, where it is proven that the basic reproduction number for the network model is $\hat{\mathcal{R}}_0=\beta/\gamma$, independently of the mobility matrix.

 For the numerical simulations, we set the parameters to \(D=10\), \(p=0.005\), \(\beta=1\), and \(\gamma=0.5\). The population consists of \(10^6\) individuals, but we normalize to have \(N=1\) for a better understanding of the proportions. The initial condition corresponds to an epidemic originating from a single infected individual in the first district, that is, \(I_1(t=0)=10^{-6}\).

Figure~\ref{fig:metapop} shows the simulated system trajectories for the different districts and the resulting sum of infected individuals. Notice that this sum exhibits a wide plateau rather than a peak, a behavior that is not captured by standard compartmental models, as already mentioned above. Actually, we chose this particular network structure in order to produce this type of evolution, where the peaks of the different districts occur at different times inducing an aggregated trajectory that behaves radically different from the trajectories of standard compartmental models.

\begin{figure}[htb]
    \centering
    \includegraphics[width=\linewidth]{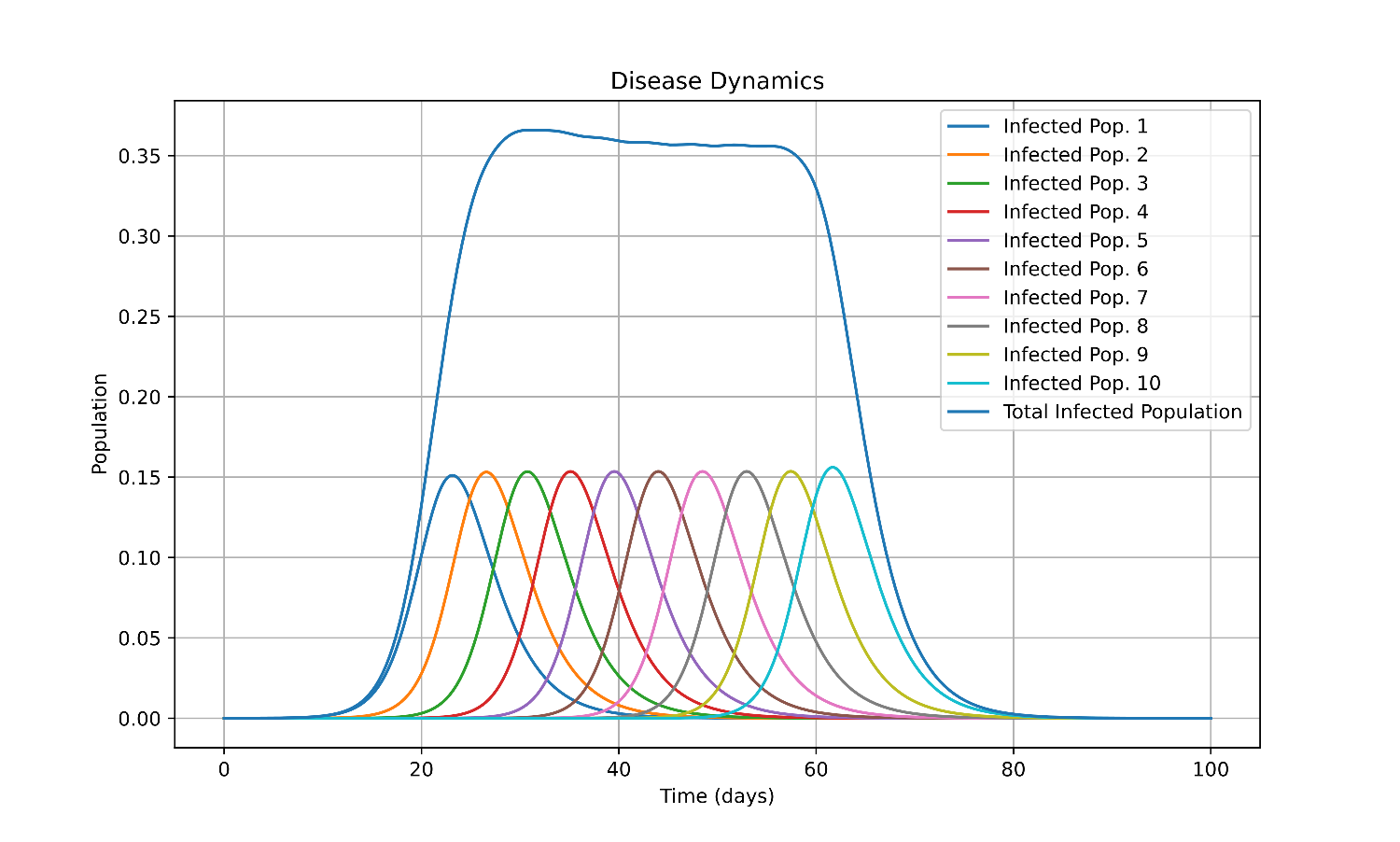}
    \caption{Infected population trajectories in the Multi-Population model.}
    \label{fig:metapop}
\end{figure}

The aggregated model of Eq.\eqref{eq:aggmodel} was then simulated with parameters $N=1$, 
$\beta = 1$, $\gamma = 0.5$,
$\rho =  0.657$, 
$\beta_R =  0.152$, and
$\alpha =  32.81$
using the saturation function of Eq.\eqref{eq:sat2}. 
The initial conditions were chosen as follows. We set $I(0)=10^{-6}$ to represent a single initial infection, and $R(0)=\pR(0)=0$. The value $\pI(0)=0.105$ was used and then we set
$S(0)=\pI(0)-I(0)$ and $\pS(0)=N-\pI(0)$.
The reported values for the initial state $\pI(0)$ and the model parameters $\rho$, $\beta_R$ and $\alpha$ were obtained by a simple optimization procedure (using Python \verb|scipy.optimize.curve_fit|) to fit the data of the total amount of infected individuals.

\begin{figure}[htb]
    \centering
    \includegraphics[width=\linewidth]{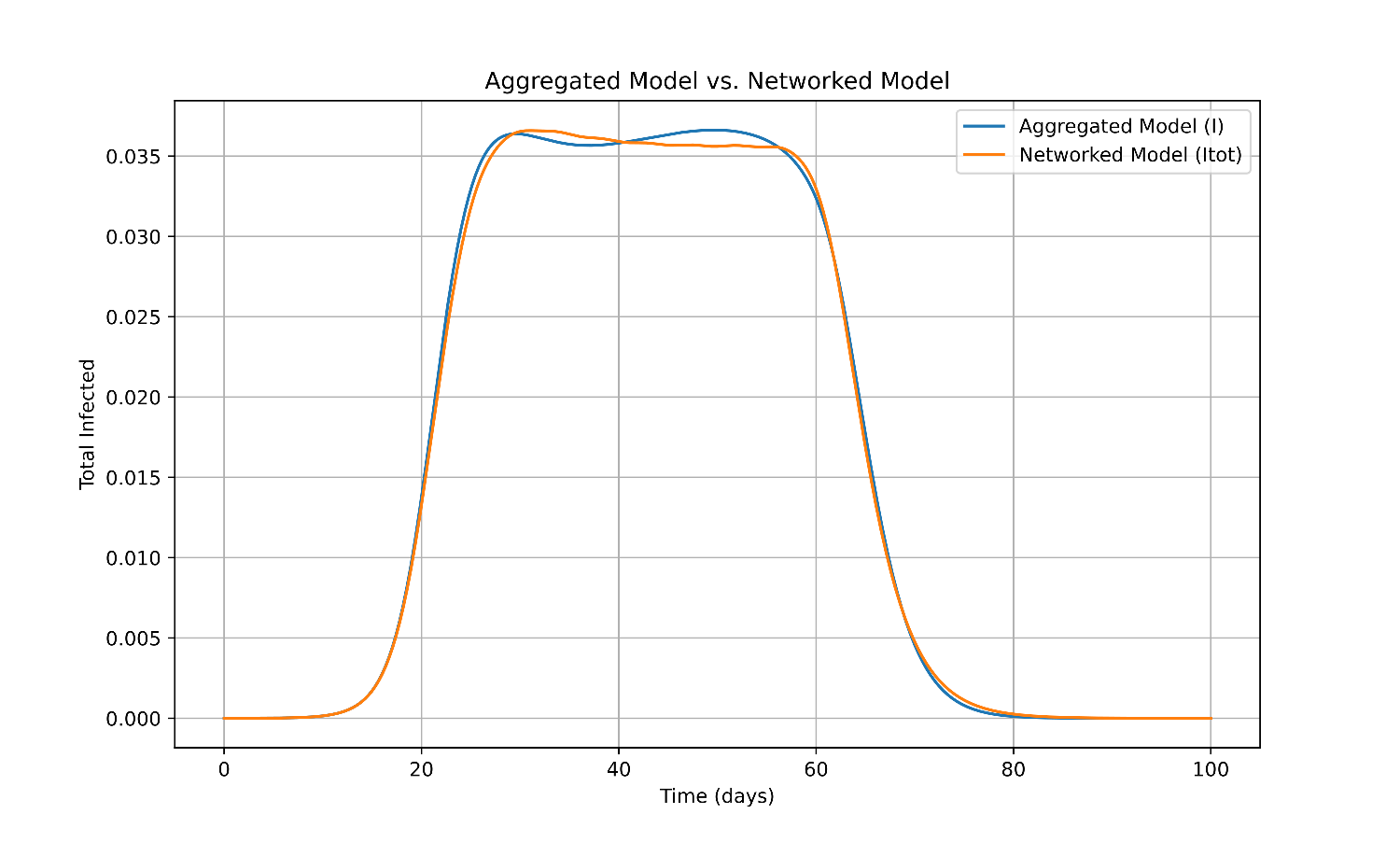}
    \caption{Infected population trajectories in the Aggregated and Multi-Population model.}
    \label{fig:aggregated}
\end{figure}

It can be observed that the aggregated model of Eq.\eqref{eq:aggmodel} can accurately reproduce the result of the metapopulation model of Eq.\eqref{eq:metapop}. Moreover, the initial value $\pI(0)=0.105\approx 1/D$   is consistent with the fact that the outbreak started at a single district that contains the $10\%$ of the total population. 

\subsection{Fitting real data}
The second example corresponds to the first wave of COVID-19 in Argentina that occurred in the period March--November 2020. The orange curve in Figure~\ref{fig:fitting} shows the daily number of reported cases (processed  with a moving average filter of length $W=7$). The raw data can be downloaded from \url{https://covidstats.com.ar}, and the number of daily cases is available at \url{https://fceia.unr.edu.ar/~kofman/files/nacion.csv}.

In this case, the curve exhibits an approximately linear growth for a long period of time (consistent with the fact that the epidemics was reaching new districts) after which it experiences an abrupt fall. Like in the previous example, this type of behavior is not captured by standard compartmental models.

In order to normalize the data for a population of $N=1$, the number of daily cases was divided by the total population of Argentina at that date $P=45\cdot 10^6$ and multiplied by $d=10$ to take into account the under-detection factor.  This factor can be roughly inferred by comparing the case fatality rate observed in Argentina 

Figure~\ref{fig:fitting} also shows the simulation results obtained using the aggregated model with parameters 
$\rho =  0.8026$, $\beta_R =  0.01911$, $\alpha =  242.4$ $, \beta =  0.2118$ and $\gamma =  0.1667$. The initial values were set to $I(0)=1.43\cdot 10^{-4}$ (in order to coincide with the initial data), $\pI(0)=0.1796$, $\pR(0)=R(0)=0$, $\pS(0)=N-\pI(0)$, $R(0)=\pR(0)=0$. 
The reported values of $\rho$, $\beta_R$, $\alpha$, $\beta$, and $p_I(0)$  were obtained by a simple optimization procedure (using Python \verb|scipy.optimize.curve_fit|), as in the first example. 

\begin{figure}[htb]
    \centering
    \includegraphics[width=\linewidth]{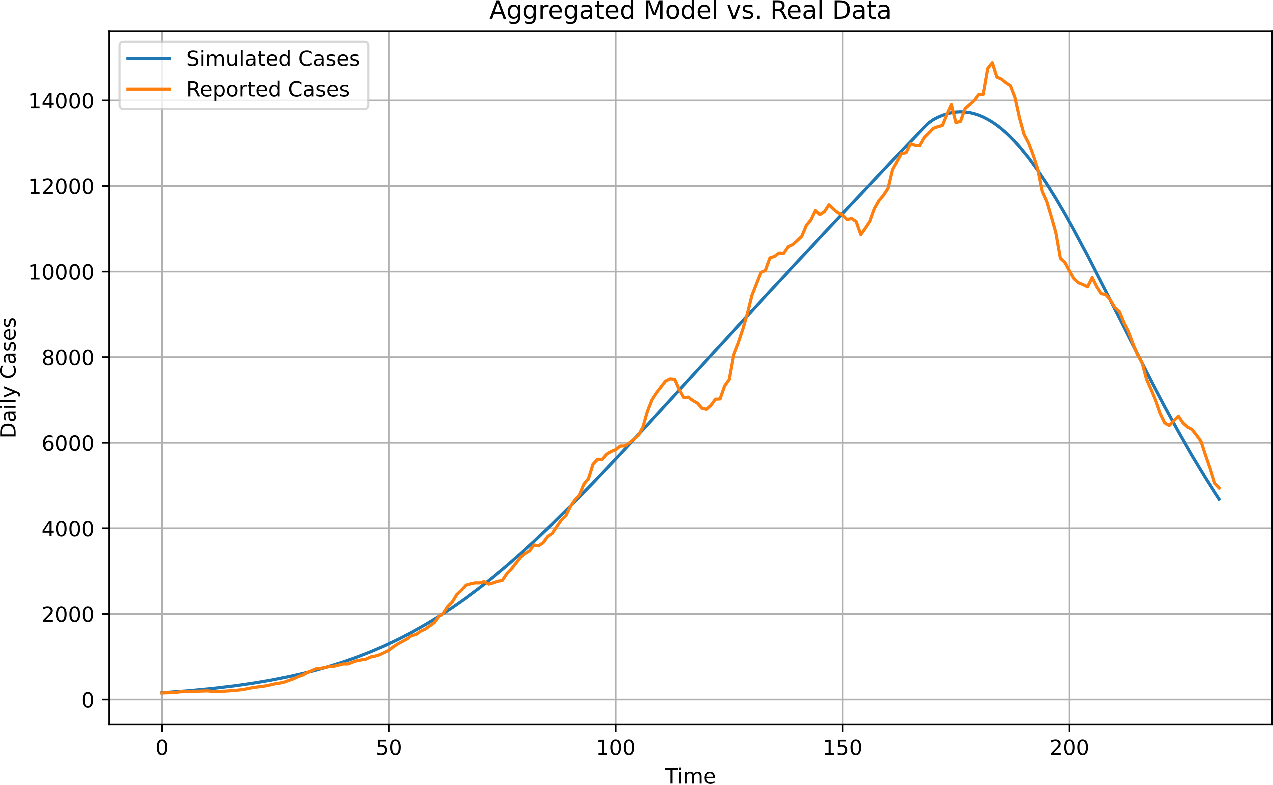}
    \caption{Daily cases of COVID-19 in Argentina (with 7-day moving average) and simulation results with the aggregated model.}
    \label{fig:fitting}
\end{figure}

As before, the aggregated model was able to accurately fit the data and the optimization procedure led to reasonable values for all the parameters.

\section{Conclusions}
We introduced a new SIR model with two additional compartments, $\pS$ and $\pR.$ The first one contains the population living in districts where the epidemics has not yet arrived and the latter one contains the population where the epidemics has already finished. That way, this new model attempts to capture the aggregated behavior of metapopulation dynamics in a simple and direct manner by adding only two state variables to those of the standard SIR model.   

The aggregated model contains only 7 parameters, where 4 of them correspond to those of the classic SIR model ($\beta$, $\gamma$, $\Rinf$, and $\Sinf$) while the remaining 3 are related to the network connectivity ($\alpha$), the population mobility ($\rho$) and the duration of the outbreak in a district ($\beta_R$). We showed that, like in the standard SIR model, the basic reproduction number is given by $\Rz=\beta/\gamma$. 

Two numerical examples were analyzed. The first case corresponds to a synthetic metapopulation model that exhibits a plateau in the sum of the infected individuals of the different districts. The second example deals with the reported cases of Covid-19 in Argentina, where it is observed that the linear growth of the daily cases follows an abrupt fall. Both types of dynamics can be explained by the fact that the epidemics reaches different districts at different times. While standard compartmental models are unable to show these types of evolutions, the proposed model could successfully reproduce both situations.

The work opens several future lines of research. First, it would be desirable, if possible, to establish a quantitative relation between the parameters $\alpha$ and $\rho$ of the aggregated model with those of the network structure and mobility. That relationship would not only simplify the model parametrization but it may also allow to infer some network features based on the aggregated behavior. Similarly, a quantitative relation between the value of $\beta_r$ in the aggregated model with those of $\gamma$, $\beta$ and the size of the different districts would further simplify the parametrization.

The idea used here for the SIR model could  be extended to other compartmental models like SEIR and SIR-SI (as for vector-borne diseases), for instance. While in an SEIR case the extension seems to be straightforward, in the SIR-SI case the problem appears to be more involved since both populations must evolve in a spatially coordinated fashion.

A potential use of the proposed model is that of studying the effect of control actions that affect mobility between districts. Here, the use of a low order model like the one proposed in this work would simplify the design of controllers and optimization procedures with respect to the use of metapopulation models. In any case, the use of the aggregated model for control purposes would require to establish first the aforementioned relationship between the parameters of this model with those of the network dynamics.



\section*{Appendix}

\begin{appendices}

\section{Computing $\mathcal{R}_0$ for the aggregated model}\label{AppR0}

We follow the calculations proposed in \cite{VanDenDriessche2002}, to compute the {\it basic reproduction number,} denoted by $\mathcal{R}_0$ for model \eqref{eq:full}. With that aim, we consider the full model, in which we re-order the variables and put first the infectious compartment $I:$
\begin{equation} \label{eq:full}
    \begin{split}
        \dot I(t)&=\beta \frac{S(t) I(t)}{S(t)+I(t)+R(t)}-\gamma I(t)\\
        \dot S(t)&=\rho I(t) \sat(\alpha \frac{ \pS(t)}{N})-\beta \frac{S(t) I(t)}{S(t)+I(t)+R(t)}-\beta_R \frac{S(t) R(t)}{S(t)+I(t)+R(t)}\\
        \dot R(t)&=\gamma I(t)-\beta_R \frac{1-\Sinf}{\Sinf} \frac{S(t) R(t)}{S(t)+I(t)+R(t)}\\
        \dpS(t)&=-\rho I(t) \sat(\alpha \frac{ \pS(t)}{N})\\
        \dpR(t)&=\frac{\beta_R}{S_\infty} \frac{S(t) R(t)}{\pI(t)}
    \end{split}
\end{equation} 
For system \eqref{eq:full}, we use $x$ to denote the state $(I,S,R,\pS,\pR)$, and then the disease-free equilibria (DFE) take the form
\begin{equation}
    x_0 = (0, S_0, 0, 0, \pRz),
\end{equation}
with $S_0 + \pRz = N.$

For system \eqref{eq:full}, we use $x$ to denote the state $(I,S,R,\pS,\pR)$, and then the disease-free equilibria (DFE) take the form
\begin{equation}
    x_0 = (0, S_0, 0, 0, \pRz),
\end{equation}
with $S_0 + \pRz = N.$
Following the notation employed in \cite{VanDenDriessche2002},
let $x = (x_1,\dots,x_n)^T,$ with each $x_i \geq 0$, be the number of individuals in each compartment, for $i=1,\dots,5$. For clarity, we sort the
compartments so that the first $m$ compartments correspond to infected individuals. In our case, $m=1.$ And we define ${\bf X}_s$ to be the set of all disease free states:
\begin{equation}
   {\bf X}_s := \{ x\geq 0:\,x_i=0,\,\, i=1,\dots,m\}. 
\end{equation}
We write the dynamics of each compartment in the form
\[
\dot{x}_i = f_i(x) = \mathcal{F}_i(x) - \mathcal{V}_i(x),
\]
where $\mathcal{F}_i(x)$ is the rate of appearance of new infections in compartment $i,$  and each $\mathcal{V}_i$ can be written in the form $\mathcal{V}_i = \mathcal{V}_i^- - \mathcal{V}_i^+,$ with $\mathcal{V}_i^-$ being the rate of transfer of individuals {\it out} of compartment $i$ and $\mathcal{V}_i^+$ the rate of transfer of individuals {\it into} of compartment $i.$
Writing $f,\mathcal{F},\mathcal{V}^-,\mathcal{V}^+$ for the vector functions with components $f_i,\mathcal{F}_i,\mathcal{V}_i^-,\mathcal{V}_i^+$ for $i=1,\dots,5,$ respectively, we have
\begin{equation}
    \mathcal{F}(x) = 
    \begin{bmatrix}
    \beta \frac{S I}{S+I+R} \\ 0 \\ 0 \\ 0 \\ 0
    \end{bmatrix},\,
    {\mathcal{V}^-}(x) = 
    \begin{bmatrix}
    \gamma I \\ \beta \frac{S I}{S+I+R}+\beta_R \frac{S R}{S+I+R} \\ \beta_R \frac{1-\Sinf}{\Sinf} \frac{S R}{S+I+R} \\ \rho I \sat(\alpha \frac{ \pS}{N}) \\ 0
    \end{bmatrix},\,
    {\mathcal{V}^+}(x) = 
    \begin{bmatrix}
    0 \\ \rho I \sat(\alpha \frac{ \pS}{N}) \\ \gamma I \\ 0 \\ \frac{\beta_R}{S_\infty} \frac{S R}{S+I+R} 
    \end{bmatrix},
\end{equation}
$\mathcal{V} = \mathcal{V}^- - \mathcal{V}^+$ and $f = \mathcal{F} - \mathcal{V}.$
We proceed now to check that with theses functions, the system verifies the requires hypotheses, namely (following the notation in \cite{VanDenDriessche2002}):
\begin{itemize}
    \item[(A1)] If $x \geq 0,$ then $\mathcal{F}_i, \mathcal{V}_i^+,\mathcal{V}_ i^- \geq 0,$ for $i=1,\dots,5.$ 
    \item[(A2)] For each $i=1,\dots, n:$ if $x_i=0,$ then $\mathcal{V}_ i^-=0.$
    In particular, if $x \in {\bf X}_s,$ then $\mathcal{V}_ i^-=0,$ for $i=1.$
    \item[(A3)] $\mathcal{F}_i=0,$ if $i>m.$
    \item[(A4)] If $x \in {\bf X}_s,$ then $\mathcal{F}_i(x)=0$ and $\mathcal{V}_i^+(x)=0,$ for $i=1.$
    \item[(A5)] If $\mathcal{F}(x)$ is set to zero, then all eigenvalues of $Df(x_0)$ have negative real parts.
\end{itemize}

Hypotheses (A1)-(A4) can be verified immediately. Let us check (A5). We have
\begin{equation}
    \begin{split}
    D\mathcal{F}(x_0) &=
    \left.
    \begin{bmatrix}
        \beta \frac{S(S+R)}{(S+I+R)^2} & \beta \frac{I(I+R)}{(S+I+R)^2} & -\beta \frac{SI}{(S+I+R)^2} & 0 & 0 \\
        0 & 0 & 0 & 0 & 0 \\
        0 & 0 & 0 & 0 & 0 \\
        0 & 0 & 0 & 0 & 0 \\
        0 & 0 & 0 & 0 & 0 
    \end{bmatrix}\right|_{x=x_0}\\
    &= 
    \begin{bmatrix}
        \beta  & 0 & 0 & 0 & 0 \\
        0 & 0 & 0 & 0 & 0 \\
        0 & 0 & 0 & 0 & 0 \\
        0 & 0 & 0 & 0 & 0 \\
        0 & 0 & 0 & 0 & 0 
    \end{bmatrix}
    \end{split},
\end{equation}
and
\if{
\begin{equation}
    \begin{split}
    & D\mathcal{V}(x_0) \\
    &= 
    \begin{bmatrix}
        \gamma & 0 & 0 & 0 & 0 \\
         \frac{\beta S(S+R) - \beta_R SR}{(S+I+R)^2} - \rho \,\sat(\alpha \frac{ \pS}{N})   & \beta \frac{(\beta I+ \beta_R R)(I+R)}{(S+I+R)^2}  & 
         \frac{-\beta SI + \beta_R S(S+I)}{(S+I+R)^2} 
        & -\rho\frac{\alpha}{N}\sat'(\alpha \frac{ \pS}{N}) & 0 \\
       - \beta_R \frac{1-S_{\infty}}{S_{\infty}} \frac{SR}{(S+I+R)^2} - \gamma & \beta_R \frac{1-S_{\infty}}{S_{\infty}} \frac{R(I+R)}{(S+I+R)^2} & \beta_R \frac{1-S_{\infty}}{S_{\infty}} \frac{S(I+S)}{(S+I+R)^2} & 0 & 0 \\
        \rho\,\sat(\alpha \frac{ \pS}{N}) & 0 & 0 & \rho \frac{\alpha}{N} I \sat'(\alpha \frac{ \pS}{N}) & 0 \\
        \frac{\beta_R SR}{S_{\infty}(S+I+R)^2} & -\frac{\beta_R R (I+R)}{S_{\infty}(S+I+R)^2} & -\frac{\beta_R S (S+I)}{S_{\infty}(S+I+R)^2} & 0 & 0 
    \end{bmatrix}|_{x=x_0} \\
    &=  
    \begin{bmatrix}
        \gamma & 0 & 0 & 0 & 0 \\
         0  & 0  & \beta_R 
        & -\rho\frac{\alpha}{N}\sat'(0) & 0 \\
        - \gamma & 0 & \beta_R \frac{1-S_{\infty}}{S_{\infty}}  & 0 & 0 \\
        0 & 0 & 0 & 0 & 0 \\
        0 & 0 & -\frac{\beta_R }{S_{\infty}} & 0 & 0 
    \end{bmatrix}.
    \end{split}
\end{equation}
}\fi
\begin{equation}
\begin{split}
D\mathcal{V}(x_0)
&=
\begin{aligned}
\left[
\begin{array}{cc}
\gamma & 0 \\
\frac{\beta S(S+R) - \beta_R SR}{(S+I+R)^2}
 - \rho \sat\!\left(\alpha \frac{\pS}{N}\right)
&
\beta \frac{(\beta I+ \beta_R R)(I+R)}{(S+I+R)^2} \\
-\beta_R \frac{1-S_{\infty}}{S_{\infty}} \frac{SR}{(S+I+R)^2} - \gamma
&
\beta_R \frac{1-S_{\infty}}{S_{\infty}} \frac{R(I+R)}{(S+I+R)^2} \\
\rho \sat\!\left(\alpha \frac{\pS}{N}\right) & 0 \\
\frac{\beta_R SR}{S_{\infty}(S+I+R)^2}
&
-\frac{\beta_R R (I+R)}{S_{\infty}(S+I+R)^2}
\end{array}
\right.
\\[1.2ex]
\left.\left.
\begin{array}{ccc}
0 & 0 & 0 \\
\frac{-\beta SI + \beta_R S(S+I)}{(S+I+R)^2}
&
-\rho \frac{\alpha}{N}\sat'\!\left(\alpha \frac{\pS}{N}\right)
& 0 \\
\beta_R \frac{1-S_{\infty}}{S_{\infty}} \frac{S(I+S)}{(S+I+R)^2}
& 0 & 0 \\
0 &
\rho \frac{\alpha}{N} I \sat'\!\left(\alpha \frac{\pS}{N}\right)
& 0 \\
-\frac{\beta_R S (S+I)}{S_{\infty}(S+I+R)^2}
& 0 & 0
\end{array}
\right]\right|_{x=x_0}
\end{aligned}\\
 &=  
    \begin{bmatrix}
        \gamma & 0 & 0 & 0 & 0 \\
         0  & 0  & \beta_R 
        & -\rho\frac{\alpha}{N}\sat'(0) & 0 \\
        - \gamma & 0 & \beta_R \frac{1-S_{\infty}}{S_{\infty}}  & 0 & 0 \\
        0 & 0 & 0 & 0 & 0 \\
        0 & 0 & -\frac{\beta_R }{S_{\infty}} & 0 & 0 
    \end{bmatrix}.
\end{split}
\end{equation}
Now, setting 
\begin{equation}
    F \coloneq \left[ \frac{\partial \mathcal{F}_i}{\partial x_j}(x_0)\right],\qquad
    V \coloneq \left[ \frac{\partial \mathcal{V}_i}{\partial x_j}(x_0)\right],\quad \text{with } 1\leq i,j \leq m,
\end{equation}
in view of \cite[Lemma 1]{VanDenDriessche2002}, we get that $F = \beta$ is non-negative and $V = \gamma$ is non-singular. The {\it basic reproduction number} for system \eqref{eq:full} is then given by
\begin{equation}
    \mathcal{R}_0 \coloneq \rho(F V^{-1}),
\end{equation}
where $\rho(A)$ denotes the spectral radius of a matrix $A.$ We have
\begin{equation}
    F \coloneq \frac{\partial \mathcal{F}_1}{\partial I}(x_0) = \beta,\quad 
    V \coloneq  \frac{\partial \mathcal{V}_i}{\partial I}(x_0) = \gamma,
\end{equation}
which leads to 
\begin{equation}
    \mathcal{R}_0 = \frac{\beta}{\gamma},
\end{equation}
which coincides with the basic reproduction number of the standard SIR model (see {\em e.g.} \cite{brauer2012mathematical}).

\section{Computing $\mathcal{R}_0$ for the network model}\label{AppR0network}

We consider a generalization of model \eqref{eq:metapop}-\eqref{hatI} for an arbitrary mobility matrix $P=[p_{ij}]_{1\leq i,j \leq D},$ where $p_{ij}$ represents the daily time proportion that a resident of region $i$ spends in region $j.$ The system is given by \eqref{eq:metapop}, where $\hat I_i$ has the general structure
\begin{equation}\label{hatI2}
    \hat I_i \coloneqq \sum_{j=1}^D p_{ji} I_j.
\end{equation}

\begin{prop}
    The basic reproduction number $\hat{\mathcal{R}}_0$ for the network model \eqref{eq:metapop}-\eqref{hatI2} is $\hat{\mathcal{R}}_0 = \frac{\beta}{\gamma}.$
\end{prop}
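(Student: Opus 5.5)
We will compute $\hat{\mathcal{R}}_0$ with the next-generation matrix method of \cite{VanDenDriessche2002}, as in Appendix~\ref{AppR0}. The only change is that there are now $m=D$ infected compartments. We order the state as $x=(I_1,\dots,I_D,S_1,\dots,S_D,R_1,\dots,R_D)$.

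\textbf{Splitting and hypotheses.} A disease-free equilibrium has the form $x_0=(0,\dots,0,S_1^0,\dots,S_D^0,R_1^0,\dots,R_D^0)$. For the computation we take the fully susceptible equilibrium $S_i^0=N_i$, $R_i^0=0$, which is the setting relevant for invasion. We split the dynamics as follows:
\[
\mathcal{F}_i=\beta\frac{S_i\hat I_i}{N_i}\ (i\le D),\qquad \mathcal{V}^-_i=\gamma I_i\ (i\le D),\qquad \mathcal{V}^-_{D+i}=\beta\frac{S_i\hat I_i}{N_i},\qquad \mathcal{V}^+_{2D+i}=\gamma I_i .
\]
All other components are zero. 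Hypotheses (A1)--(A4) then follow directly from $p_{ji}\ge 0$ and $x\ge 0$, exactly as in Appendix~\ref{AppR0}. For (A5) we follow the same convention as in Appendix~\ref{AppR0}. The relevant point is that the linearization restricted to the infected block is $-\gamma\,\mathrm{Id}_D$, whose eigenvalues are strictly negative. The susceptible and recovered blocks are only neutral, because the disease-free equilibria form a continuum; this is the same situation as for the aggregated model.

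\textbf{Next-generation matrix.} Differentiating with respect to $I_1,\dots,I_D$ at $x_0$ and using $\partial \hat I_i/\partial I_j=p_{ji}$ gives
\[
F=\Big[\beta\,\tfrac{S_i^0}{N_i}\,p_{ji}\Big]_{i,j}=\beta P^{T},\qquad V=\gamma\,\mathrm{Id}_D,
\]
since $S_i^0/N_i=1$. Hence $FV^{-1}=\frac{\beta}{\gamma}P^T$, and therefore
\[
\hat{\mathcal{R}}_0=\rho(FV^{-1})=\frac{\beta}{\gamma}\,\rho(P^T)=\frac{\beta}{\gamma}\,\rho(P).
\]

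\textbf{Spectral radius of $P$.} The remaining step, and the only one needing an argument, is to show $\rho(P)=1$. We will use that $P$ is nonnegative and row-stochastic: $\sum_j p_{ij}=1$, because the time fractions a resident of region $i$ spends across all regions add up to one. In the example \eqref{hatI} these are the proportions $1-p$, $1-2p$ and $p$. With $\mathbf 1=(1,\dots,1)^T$ we have $P\mathbf 1=\mathbf 1$, so $1$ is an eigenvalue and $\rho(P)\ge 1$. Conversely, $\rho(P)\le\|P\|_\infty=\max_i\sum_j p_{ij}=1$. (Equivalently, Gershgorin's theorem or Perron--Frobenius gives the same bound.) Therefore $\rho(P)=1$ and $\hat{\mathcal{R}}_0=\beta/\gamma$, independently of the mobility matrix and of the district sizes $N_i$.

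The main obstacle is making the row-stochasticity assumption on $P$ explicit, since the statement relies on it implicitly. Without it, the same computation gives $\hat{\mathcal{R}}_0=\frac{\beta}{\gamma}\rho(P)$ in general.
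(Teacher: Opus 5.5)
Your proposal is correct and follows the paper's proof essentially step by step: the same van den Driessche--Watmough splitting with state $(I_1,\dots,I_D,S_1,\dots,S_D,R_1,\dots,R_D)$, evaluation at the fully susceptible DFE, $V=\gamma\,\mathrm{Id}_D$, and $\hat{\mathcal{R}}_0=\frac{\beta}{\gamma}\rho(P)$ with $\rho(P)=1$ for row-stochastic $P$. Your write-up is slightly more careful than the paper's in three places: you correctly obtain $F=\beta P^{T}$ (the paper writes $\beta P$, which is harmless since $\rho(P^{T})=\rho(P)$); you prove $\rho(P)=1$ via $P\mathbf{1}=\mathbf{1}$ and $\|P\|_\infty=1$ instead of asserting it; and you present $S_i^0=N_i$, $R_i^0=0$ as a choice among a continuum of DFEs rather than as forced.
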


\begin{proof}
Following the calculations for $\mathcal{R}_0$ of previous subsection, we have
\begin{equation}
    \mathcal{F}(x) = 
    \begin{bmatrix}
    \beta \frac{S_1}{N_1}  \sum_{j=1}^D p_{j1} I_j \\ \beta \frac{S_2}{N_2}  \sum_{j=1}^D p_{j2} I_j \\ \vdots  \\ 
    \beta \frac{S_D}{N_D}  \sum_{j=1}^D p_{jD} I_j \\
    0 \\
    \vdots \\
    0 \\
    0 \\
    \vdots \\
    0
    \end{bmatrix},\,
    {\mathcal{V}^-}(x) = 
    \begin{bmatrix}
    \gamma I_1 \\
    \gamma I_2 \\
    \vdots \\
    \gamma I_D \\
    \beta \frac{S_1}{N_1}  \sum_{j=1}^D p_{j1} I_j \\ \vdots  \\ 
    \beta \frac{S_D}{N_D}  \sum_{j=1}^D p_{jD} I_j \\
    0 \\
    \vdots \\
    0
    \end{bmatrix},\,
    {\mathcal{V}^+}(x) = 
    \begin{bmatrix}
    0 \\
    \vdots \\
    0 \\
    0 \\
    \vdots \\
    0 \\
    \gamma I_1 \\
    \gamma I_2 \\
    \vdots \\
    \gamma I_D \\
    \end{bmatrix},
\end{equation}
We get
\begin{equation}
    F = 
    \begin{bmatrix}
        \beta \frac{S_{1,0}}{N_1} p_{11} & \beta \frac{S_{1,0}}{N_1} p_{21} & \dots & \beta \frac{S_{1,0}}{N_1} p_{D1} \\
        \vdots & \vdots & \ddots & \vdots \\
         \beta \frac{S_{D,0}}{N_D} p_{1D} & \beta \frac{S_{D,0}}{N_D} p_{2D} & \dots & \beta \frac{S_{D,0}}{N_D} p_{DD} \\
    \end{bmatrix},
     V = 
    \begin{bmatrix}
        \gamma & \dots & 0 \\
        \vdots  & \ddots & \vdots \\
        0 & \dots & \gamma
    \end{bmatrix}.
\end{equation}
On a DFE, we necessarily have $I_{i,0} = 0$ and $R_{i,0}=0,$  thus $S_{i,0} = N_i,$ for all $i=1,\dots, D.$ So we get
\begin{equation}
    F = \beta P.
\end{equation}
Consequently,
\begin{equation}
    \hat{\mathcal{R}}_0 = \rho(F V^{-1}) = \frac{\beta}{\gamma}\rho(P) =  \frac{\beta}{\gamma},
\end{equation}
for any mobility matrix, as long as it is row-stochastic (non-negative entries and, for each row, the entries sum 1).
\end{proof}

\end{appendices}

\bibliographystyle{plain}       
\bibliography{biblio}

@book{brauer2012mathematical,
  title={Mathematical models in population biology and epidemiology},
  author={Brauer, F. and Castillo-Chavez, C.},
  volume={2},
  number={10},
  year={2012},
  publisher={Springer}
}

@article{marshall2015formalizing,
  title={Formalizing the role of agent-based modeling in causal inference and epidemiology},
  author={Marshall, B.D.L. and Galea, S.},
  journal={American journal of epidemiology},
  volume={181},
  number={2},
  pages={92--99},
  year={2015},
  publisher={Oxford University Press}
}

@article{hunter2018comparison,
  title={A comparison of agent-based models and equation based models for infectious disease epidemiology},
  author={Hunter, E. and Mac Namee, B. and Kelleher, J.D.},
  year={2018},
  publisher={Technological University Dublin}
}

@article{hoang2025differential,
  title={Differential equation models for infectious diseases: Mathematical modeling, qualitative analysis, numerical methods and applications: MT Hoang, M. Ehrhardt},
  author={Hoang, M.T. and Ehrhardt, M.},
  journal={SeMA Journal},
  pages={1--36},
  year={2025},
  publisher={Springer}
}

@article{kallen1985simple,
  title={A simple model for the spatial spread and control of rabies},
  author={K{\"a}ll{\'e}n, A. and Arcuri, P. and Murray, J.D.},
  journal={Journal of Theoretical Biology},
  volume={116},
  number={3},
  pages={377--393},
  year={1985},
  publisher={Elsevier}
}

@article{murray1986spatial,
  title={On the spatial spread of rabies among foxes},
  author={Murray, J.D. and Stanley, E.A. and Brown, D.L.},
  journal={Proceedings of the Royal society of London. Series B. Biological sciences},
  volume={229},
  number={1255},
  pages={111--150},
  year={1986},
  publisher={The Royal Society London}
}

@article{hunter2017taxonomy,
  title={A taxonomy for agent-based models in human infectious disease epidemiology},
  author={Hunter, E. and Mac Namee, B. and Kelleher, J.D.},
  journal={Journal of Artificial Societies and Social Simulation},
  volume={20},
  number={3},
  year={2017},
  publisher={Jasss}
}

@article{pastor2002epidemic,
  title={Epidemic dynamics in finite size scale-free networks},
  author={Pastor-Satorras, R. and Vespignani, A.},
  journal={Physical Review E},
  volume={65},
  number={3},
  pages={035108},
  year={2002},
  publisher={APS}
}

@article{pastor2001epidemic,
  title={Epidemic dynamics and endemic states in complex networks},
  author={Pastor-Satorras, R. and Vespignani, A.},
  journal={Physical Review E},
  volume={63},
  number={6},
  pages={066117},
  year={2001},
  publisher={APS}
}

@article{rhodes1996persistence,
  title={Persistence and dynamics in lattice models of epidemic spread},
  author={Rhodes, C.J. and Anderson, R.M.},
  journal={Journal of theoretical biology},
  volume={180},
  number={2},
  pages={125--133},
  year={1996},
  publisher={Elsevier}
}

@article{bolker1995space,
  title={Space, persistence and dynamics of measles epidemics},
  author={Bolker, B. and Grenfell, B.T.},
  journal={Philosophical Transactions of the Royal Society of London. Series B: Biological Sciences},
  volume={348},
  number={1325},
  pages={309--320},
  year={1995},
  publisher={The Royal Society London}
}

@article{may1984spatial,
  title={Spatial heterogeneity and the design of immunization programs},
  author={May, R.M. and Anderson, R.M.},
  journal={Mathematical Biosciences},
  volume={72},
  number={1},
  pages={83--111},
  year={1984},
  publisher={Elsevier}
}

@article{post1983endemic,
  title={Endemic disease in environments with spatially heterogeneous host populations},
  author={Post, W.M. and DeAngelis, D.L. and Travis, C.C.},
  journal={Mathematical Biosciences},
  volume={63},
  number={2},
  pages={289--302},
  year={1983},
  publisher={Elsevier}
}

@article{hethcote1978immunization,
  title={An immunization model for a heterogeneous population},
  author={Hethcote, H.W.},
  journal={Theoretical population biology},
  volume={14},
  number={3},
  pages={338--349},
  year={1978},
  publisher={Elsevier}
}

@article{VanDenDriessche2002,
  title={Reproduction numbers and sub-threshold endemic equilibria for compartmental models of disease transmission},
  author={Van den Driessche, P. and Watmough, J.},
  journal={Mathematical biosciences},
  volume={180},
  number={1-2},
  pages={29--48},
  year={2002},
  publisher={Elsevier}
}

@article{sattenspiel1995structured,
  title={A structured epidemic model incorporating geographic mobility among regions},
  author={L. Sattenspiel and K. Dietz},
  journal={Math. Biosci.},
  volume={128},
  number={1-2},
  pages={71--91},
  year={1995}
}

@article{arino2003multi,
  title={A multi-city epidemic model},
  author={J. Arino and P. Van den Driessche},
  journal={Math. Population Stud.},
  volume={10},
  number={3},
  pages={175--193},
  year={2003}
}

@article{colizza2007reaction,
  title={Reaction--diffusion processes and metapopulation models in heterogeneous networks},
  author={V. Colizza and R. Pastor-Satorras and A. Vespignani},
  journal={Nat. Phys.},
  volume={3},
  number={4},
  pages={276--282},
  year={2007}
}

@article{colizza2008epidemic,
  title={Epidemic modeling in metapopulation systems with heterogeneous coupling pattern: Theory and simulations},
  author={V. Colizza and A. Vespignani},
  journal={J. Theor. Biol.},
  volume={251},
  number={3},
  pages={450--467},
  year={2008}
}

@article{aronna2024optimal,
  title={Optimal vaccination strategies on networks and in metropolitan areas},
  author={Aronna, M. S. and Moschen, L. M.},
  journal={Infectious Disease Modeling},
  volume={9},
  number={4},
  pages={1198--1222},
  year={2024},
  publisher={Elsevier}
}

\end{document}